\newcounter{minutes}\setcounter{minutes}{\time}
\newcounter{hours}\setcounter{hours}{\time}
\title{Global properties of the symmetrized $S$-divergence}
\author{Slavko Simi\'c}
\address{ Mathematical Institute SANU, Kneza Mihaila 36, 11000
Belgrade, Serbia} \email{ ssimic@turing.mi.sanu.ac.rs}
\keywords{ relative divergence of type $s$; monotonicity;
log-convexity.} \subjclass{60E15}
\newtheorem{lemma}[equation]{Lemma}
\newtheorem{proposition}[equation]{Proposition}
\newtheorem{corollary}[equation]{Corollary}
\newtheorem{remark}[equation]{Remark}
\newcommand{\beq}{\begin{equation}}
\newcommand{\eeq}{\end{equation}}
\numberwithin{equation}{section}
\begin{document}

\def\thefootnote{}
\footnotetext{ \texttt{\tiny File:~\jobname .tex,
          printed: \number\year-\number\month-\number\day,
          \thehours.\ifnum\theminutes<10{0}\fi\theminutes}
} \makeatletter\def\thefootnote{\@arabic\c@footnote}\makeatother


\maketitle

\begin{abstract}
In this paper we give a study of the symmetrized divergences
$U_s(p,q)=K_s(p||q)+K_s(q||p)$ and $V_s(p,q)=K_s(p||q)K_s(q||p)$,
where $K_s$ is the relative divergence of type $s, s\in\mathbb R$.
Some basic properties as symmetry, monotonicity and log-convexity
are established. An important result from the Convexity Theory is
also proved.
\end{abstract}


\section{Introduction}

\vspace{0.5cm}

Let
$$
\Omega^+=\{p=\{p_i\} \ | \ p_i>0, \sum p_i=1\},
$$
be the set of finite discrete probability distributions.

\vspace{0.5cm}

 One of the most general probability measures which
is of importance in Information Theory is the famous Csisz\'{a}r's
$f$-divergence $C_f(p||q)$ ([5]), defined by

\vspace{0.5cm}

{\bf Definition 1} {\it For a convex function $f: (0,\infty)\to
\mathbb R$, the $f$-divergence measure  is given by
$$
C_f(p||q):=\sum q_i f(p_i/q_i),
$$
where $p, q\in \Omega^+$.}

\vspace{0.5cm}

 Some important information measures are just particular cases of the Csisz\'{a}r's $f$-divergence.

\vspace{0.5cm}

For example,

\vspace{0.5cm}

(a) \ taking $f(x)=x^\alpha, \ \alpha>1$, we obtain the
$\alpha$-order divergence defined by
$$
I_\alpha (p||q):=\sum p_i^\alpha q_i^{1-\alpha};
$$

\vspace{0.5cm}

{\bf Remark} \ {\it The above quantity is an argument in
well-known theoretical divergence measures such as Renyi
$\alpha$-order divergence  $I^R_\alpha(p||q)$ or Tsallis
divergence $I^T_\alpha(p||q)$, defined as}
$$
I^R_\alpha(p||q):=\frac{1}{\alpha-1}\log I_\alpha(p||q); \ \
I^T_\alpha(p||q):=\frac{1}{\alpha-1}(I_\alpha(p||q)-1).
$$

\vspace{0.5cm}

 (b) \ for $f(x)=x\log x$, we obtain the Kullback-Leibler divergence ([3]) defined by
$$
K(p||q):=\sum p_i\log(p_i/q_i);
$$

\vspace{0.5cm}

(c) \ for $f(x)=(\sqrt x -1)^2$, we obtain the Hellinger distance
$$
H^2(p, q):=\sum (\sqrt p_i-\sqrt q_i)^2;
$$

\vspace{0.5cm}

(d) \ if we choose $f(x)=(x-1)^2$, then we get the
$\chi^2$-distance
$$
\chi^2(p, q):=\sum (p_i-q_i)^2/q_i.
$$

\vspace{0.5cm}

The generalized measure $K_s(p||q)$, known as {\it the relative
divergence of type $s$} ([6], [7]), or simply {\it
$s$-divergence}, is defined by

\vspace{0.5cm}

$$
K_s(p||q):=\begin{cases} (\sum p_i^s q_i^{1-s}-1)/ s(s-1) &, s\in \mathbb R/\{0, 1\};\\
                   K(q||p)                         &, s=0;\\
                   K(p||q)                         &, s=1.
                   \end{cases}
                   $$

\vspace{0.5cm}

It include the Hellinger and $\chi^2$ distances as particular
cases.

\vspace{0.5cm}

Indeed,
$$
K_{1/2}(p||q)=4(1-\sum\sqrt {p_i q_i})=2\sum(p_i+q_i-2\sqrt {p_i
q_i})=2H^2(p,q);
$$
$$
K_2(p||q)=\frac{1}{2}(\sum \frac{p_i^2}{q_i}-1)=\frac{1}{2}\sum
\frac{(p_i-q_i)^2}{q_i}=\frac{1}{2}\chi^2(p, q).
$$

\vspace{0.5cm}

The $s$-divergence represents an extension of Tsallis divergence
to the real line and accordingly is of importance in Information
Theory. Main properties of this measure are given in \cite{t}.

\vspace{0.5cm}

{\bf Theorem A} {\it For fixed $p, q\in\Omega^+, p\neq q$, the
$s$-divergence is a positive, continuous and convex function in
$s\in\mathbb R$.}

\vspace{0.5cm}

We shall use in this article a stronger property.

\vspace{0.5cm}

{\bf Theorem B} {\it For fixed $p, q\in\Omega^+, p\neq q$, the
$s$-divergence is a log-convex function in $s\in\mathbb R$.}

\vspace{0.5cm}

\begin{proof} This  is a corollary of an assertion proved in
\cite{ss}. It says that for arbitrary positive sequence $\{x_i\}$
and associated weight sequence $q\in Q$ (see Appendix), the
quantity $\lambda_s$ defined by
$$
\lambda_s:=\frac{\sum q_ix_i^s-(\sum q_ix_i)^s}{s(s-1)}
$$

is logarithmically convex in $s\in\mathbb R$.

\vspace{0.5cm}

Putting there $x_i=p_i/q_i$, we obtain that $\lambda_s=K_s(p||q)$
is log-convex in $s\in\mathbb R$. Hence, for any real $s,t$ we
have that

$$
K_s(p||q)K_t(p||q)\ge K_{\frac{s+t}{2}}^2(p||q).
$$

\end{proof}

\vspace{0.5cm}

Among all mentioned measures, only Hellinger distance has a
symmetry property $H^2=H^2(p,q)=H^2(q,p)$. Our aim in this paper
is to investigate some global properties of the symmetrized
measures $U_s=U_s(p,q)=U_s(q,p):=K_s(p||q)+K_s(q||p)$ and
$V_s=V_s(p,q)=V_s(q,p):=K_s(p||q)K_s(q||p)$. Since S. Kullback and
R. Leibler themselves in their fundamental paper \cite{kl} (see
also \cite{j}) worked with the symmetrized variant
$J(p,q):=K(p||q)+K(q||p)=\sum(p_i-q_i)\log(p_i/q_i)$, our results
can be regarded as a continuation of their ideas.

\vspace{0.5cm}

\section{Results and Proofs}

\vspace{0.5cm}

We shall give firstly some properties of the symmetrized
divergence $V_s=K_s(p||q)K_s(q||p)$.

\vspace{0.5cm}

\begin{proposition} \ 1. For arbitrary, but fixed probability distributions
$p,q\in\Omega^+, p\neq q$, the divergence $V_s$ is a positive and
continuous function in $s\in\mathbb R$.

2. $V_s$ is a log-convex (hence convex) function in $s\in\mathbb
R$.

3. The  graph of $V_s$ is symmetric with respect to the line
$s=1/2$, bounded from below with the universal constant $4H^4$ and
unbounded from above.

4. $V_s$ is monotone decreasing for $s\in (-\infty, 1/2)$ and
monotone increasing for $s\in (1/2,+\infty)$.

5. The inequality

$$
V_s^{t-r}\le V_r^{t-s}V_t^{s-r}
$$

holds for any $r<s<t$.

\end{proposition}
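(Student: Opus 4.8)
The plan is to deduce statement 5 directly from the log-convexity established in part 2, since the stated inequality is precisely the exponentiated (multiplicative) form of convexity of $\log V_s$. First I would record the two facts already proved above: by part 1 we have $V_s>0$ for every $s\in\mathbb R$, so that $g(s):=\log V_s$ is a well-defined real-valued function, and by part 2 this function $g$ is convex on $\mathbb R$.

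Next I would reduce the claim to a three-point convexity inequality. Taking logarithms of the desired inequality $V_s^{t-r}\le V_r^{t-s}V_t^{s-r}$, and using that $t-r>0$, it becomes equivalent to
$$(t-r)\,g(s)\le (t-s)\,g(r)+(s-r)\,g(t).$$
The key algebraic observation is that, for $r<s<t$, the middle point $s$ is the convex combination $s=\lambda r+(1-\lambda)t$ with $\lambda=\frac{t-s}{t-r}$ and $1-\lambda=\frac{s-r}{t-r}$, both weights lying in $(0,1)$ precisely because $r<s<t$.

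Then I would apply the convexity of $g$ to this convex combination, obtaining $g(s)\le \lambda\,g(r)+(1-\lambda)\,g(t)$, that is $g(s)\le \frac{t-s}{t-r}\,g(r)+\frac{s-r}{t-r}\,g(t)$. Multiplying through by $t-r>0$ yields the displayed inequality for $g$, and exponentiating (the exponential being increasing, hence order-preserving) returns $V_s^{t-r}\le V_r^{t-s}V_t^{s-r}$, as required.

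As for the main obstacle: there is essentially none at this stage, because all the analytic weight of the statement is carried by the log-convexity proved in part 2; the present step is a routine rewriting of convexity in its equivalent multiplicative Hadamard-type form. The only points needing care are the verification of the convex-combination identity for $s$ and keeping track that the factor $t-r$ is strictly positive, so that neither the multiplication nor the exponentiation reverses the inequality.
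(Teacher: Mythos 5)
Your argument is correct and takes essentially the same route as the paper: the paper applies a quoted three-point convexity inequality from Hardy--Littlewood--P\'olya (its Lemma) to $\phi(s)=\log V_s$, obtaining $(t-r)\log V_s\le (t-s)\log V_r+(s-r)\log V_t$, which is precisely the inequality you derive by writing $s=\lambda r+(1-\lambda)t$ with $\lambda=\frac{t-s}{t-r}$. One small point to keep explicit: part 2 only establishes the midpoint inequality $V_sV_t\ge V_{(s+t)/2}^2$, so the continuity from part 1 is needed (as the paper notes by invoking parts 1 and 2 together) to upgrade midpoint log-convexity to the full convexity your convex-combination step uses.
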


\vspace{0.5cm}

\begin{proof} The Part 1. is a simple consequence of Theorem A
above.

\vspace{0.5cm}

The proof of Part 2. follows by using Theorem B. Namely, for any
$s,t\in\mathbb R$ we have

$$
V_sV_t=[K_s(p||q)K_s(q||p)][K_t(p||q)K_t(q||p)]=[K_s(p||q)K_t(p||q)][K_s(q||p)K_t(q||p)]
$$
$$
\ge
[K_\frac{s+t}{2}(p||q)]^2[K_\frac{s+t}{2}(q||p)]^2=[V_\frac{s+t}{2}]^2.
$$

\vspace{0.5cm}

3. Note that

$$
K_s(p||q)=K_{1-s}(q||p); K_s(q||p)=K_{1-s}(p||q).
$$

\vspace{0.5cm}

Hence $V_s=V_{1-s}$, that is $V_{1/2-s}=V_{1/2+s}, s\in\mathbb R$.

\vspace{0.5cm}

Also,

$$
V_s=K_s(p||q)K_s(q||p)=K_s(p||q)K_{1-s}(p||q)\ge
K_{1/2}^2(p||q)=4H^4.
$$

\vspace{0.5cm}

4. We shall prove only the "increasing" assertion. The other part
follows from graph symmetry.

\vspace{0.5cm}

Therefore, for any $1/2<x<y$ we have that

$$
1-y<1-x<x<y.
$$

Applying Proposition X (see Appendix) with $a=1-y, b=y, s=1-x,
t=x; f(s):=\log K_s(p||q)$, we get

$$
\log K_x(p||q)+\log K_{1-x}(p||q)\le \log K_y(p||q)+\log
K_{1-y}(p||q),
$$

\vspace{0.5cm}

that is $V_x\le V_y$ for $x<y$.

\vspace{0.5cm}

5. From the parts 1 and 2, it follows that $\log V_s$ is a
continuous and convex function on $\mathbb R$. Therefore we can
apply the following alternative form \cite{hlp}:

\vspace{0.5cm}

\begin{lemma} \label{3} If $\phi(s)$ is continuous and convex for all
$s$ of an open interval $I$ for which $s_1<s_2<s_3$, then

$$
\phi(s_1)(s_3-s_2)+\phi(s_2)(s_1-s_3)+\phi(s_3)(s_2-s_1)\ge 0.
$$
\end{lemma}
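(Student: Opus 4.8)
The plan is to recognize the stated inequality as nothing more than the defining chord inequality for a convex function, rewritten after clearing denominators, so that no machinery beyond the definition of convexity is required. First I would use the hypothesis $s_1 < s_2 < s_3$ to express the middle node as a convex combination of the outer two: setting $\lambda := (s_3 - s_2)/(s_3 - s_1)$, one has $\lambda \in (0,1)$, $1 - \lambda = (s_2 - s_1)/(s_3 - s_1)$, and a short computation confirms $s_2 = \lambda s_1 + (1-\lambda) s_3$.

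The central step is then to apply convexity of $\phi$ at this combination, which gives
$$\phi(s_2) \le \lambda\,\phi(s_1) + (1-\lambda)\,\phi(s_3) = \frac{s_3 - s_2}{s_3 - s_1}\,\phi(s_1) + \frac{s_2 - s_1}{s_3 - s_1}\,\phi(s_3).$$
Since $s_3 - s_1 > 0$, I would multiply through by this factor without reversing the inequality, obtaining $\phi(s_2)(s_3 - s_1) \le \phi(s_1)(s_3 - s_2) + \phi(s_3)(s_2 - s_1)$, and finally transpose the left-hand term, writing $-(s_3 - s_1) = s_1 - s_3$, to reach precisely
$$\phi(s_1)(s_3 - s_2) + \phi(s_2)(s_1 - s_3) + \phi(s_3)(s_2 - s_1) \ge 0,$$
which is the assertion.

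I do not expect a genuine obstacle here; the only point deserving attention is the precise reading of the word \emph{convex}. If $\phi$ is convex in the usual sense, the argument above is complete in two lines and the continuity hypothesis is in fact redundant. If one instead interprets the assumption as mere midpoint (Jensen) convexity, then continuity on the open interval $I$ is exactly what upgrades it to full convexity, thereby validating the weighted inequality used above; under either reading the conclusion follows. Thus the whole difficulty, such as it is, reduces to performing the convex-combination bookkeeping correctly and noting the positivity of $s_3 - s_1$, which keeps the direction of the inequality intact.
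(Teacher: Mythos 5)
Your proof is correct and complete: writing $s_2=\lambda s_1+(1-\lambda)s_3$ with $\lambda=(s_3-s_2)/(s_3-s_1)\in(0,1)$, applying the chord inequality, and clearing the positive denominator $s_3-s_1$ yields exactly the stated three-term inequality. Note, however, that the paper itself offers no proof of this lemma at all --- it is quoted verbatim from Hardy--Littlewood--P\'olya as a known ``alternative form'' of convexity, so your two-line derivation supplies an argument the paper deliberately omits, and it is the standard (indeed essentially the only) one. Your closing remark about the reading of \emph{convex} is also well taken and matches the paper's own conventions: in its Appendix the paper defines convexity by the midpoint inequality $\frac{\phi(x)+\phi(y)}{2}\ge\phi\bigl(\frac{x+y}{2}\bigr)$ and explicitly invokes continuity to upgrade this to the weighted inequality $p\phi(x)+q\phi(y)\ge\phi(px+qy)$, which is precisely the form your argument uses; so within the paper's framework the continuity hypothesis is not redundant but is exactly what licenses your central step. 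In short: correct, elementary, and a faithful filling-in of a citation rather than a divergence from the paper's method.
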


\vspace{0.5cm}

Hence, for $r<s<t$ we get
$$
(t-r)\log V_s\le (t-s)\log V_r+(s-r)\log V_t,
$$
which is equivalent to the assertion of Part 5.
\end{proof}

\vspace{0.5cm}

Properties of the symmetrized measure $U_s:=K_s(p||q)+K_s(q||p)$
are very similar; therefore some analogous proofs will be omitted.

\vspace{0.5cm}

\begin{proposition} \ 1. The divergence $U_s$ is a positive and continuous function in
$s\in\mathbb R$.

2. $U_s$ is a log-convex function in $s\in\mathbb R$.

3. The  graph of $U_s$ is symmetric with respect to the line
$s=1/2$, bounded from below with $4H^2$ and unbounded from above.

4. $U_s$ is monotone decreasing for $s\in (-\infty, 1/2)$ and
monotone increasing for $s\in (1/2,+\infty)$.

5. The inequality

$$
U_s^{t-r}\le U_r^{t-s}U_t^{s-r}
$$

holds for any $r<s<t$.

\end{proposition}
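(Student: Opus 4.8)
The plan is to mirror the proof of Proposition 2.1 almost line for line, since $U_s$ and $V_s$ rest on the same ingredients (Theorems A and B, the symmetry identities, and Lemma \ref{3}); the single genuinely new point is Part 2, where a \emph{sum} rather than a \emph{product} of log-convex functions must be handled.

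Part 1 is immediate from Theorem A: each of $K_s(p||q)$ and $K_s(q||p)$ is positive and continuous in $s$, and $U_s$ is their sum. For Part 2, the obstacle is that log-convexity is trivially preserved under multiplication (as was used for $V_s$) but not obviously under addition. The plan is to invoke the classical fact that a sum of log-convex functions is again log-convex. Writing $f=K_s(p||q)$ and $g=K_s(q||p)$, both log-convex in $s$ by Theorem B, I would start from their defining inequalities and then apply H\"older's inequality in its two-term form
$$
f(x)^\lambda f(y)^{1-\lambda}+g(x)^\lambda g(y)^{1-\lambda}\le\bigl(f(x)+g(x)\bigr)^\lambda\bigl(f(y)+g(y)\bigr)^{1-\lambda},
$$
valid for $\lambda\in(0,1)$. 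Combined with $f(\lambda x+(1-\lambda)y)\le f(x)^\lambda f(y)^{1-\lambda}$ and the analogous bound for $g$, this yields $(f+g)(\lambda x+(1-\lambda)y)\le (f+g)(x)^\lambda(f+g)(y)^{1-\lambda}$, i.e. $U_s$ is log-convex. This is the only place where the argument departs from the one for $V_s$, and I expect it to be the main obstacle.

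For Part 3 I would use the identities $K_s(p||q)=K_{1-s}(q||p)$ and $K_s(q||p)=K_{1-s}(p||q)$ (already recorded in the proof for $V_s$), which give $U_s=U_{1-s}$, hence symmetry about $s=1/2$. For the lower bound, rewrite $U_s=K_s(p||q)+K_{1-s}(p||q)$ and apply the AM--GM inequality followed by the midpoint case of log-convexity from Theorem B:
$$
U_s\ge 2\sqrt{K_s(p||q)K_{1-s}(p||q)}\ge 2K_{1/2}(p||q)=4H^2,
$$
with equality at $s=1/2$. Unboundedness from above follows because, for $p\neq q$, some ratio $p_i/q_i$ exceeds $1$, so $K_s(p||q)\to\infty$ as $s\to\infty$.

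Parts 4 and 5 are then formal. Part 4 follows from convexity together with the symmetry about $s=1/2$ just established (a convex function symmetric about a vertical line decreases to its left and increases to its right), or alternatively by repeating the Proposition X argument with $f(s):=\log K_s(p||q)$. Part 5 is Lemma \ref{3} applied to the convex function $\phi(s)=\log U_s$, giving $(t-r)\log U_s\le (t-s)\log U_r+(s-r)\log U_t$, which exponentiates to the stated inequality.
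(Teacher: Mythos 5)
Your proposal is correct, and in the one place where the statement needs a new idea --- Part 2 --- you take a genuinely different route from the paper. You prove log-convexity of the sum $U_s=f+g$ by the standard H\"older argument, $f(x)^\lambda f(y)^{1-\lambda}+g(x)^\lambda g(y)^{1-\lambda}\le\bigl(f(x)+g(x)\bigr)^\lambda\bigl(g(y)+f(y)\bigr)^{1-\lambda}$, chained with the log-convexity of $f$ and $g$ from Theorem B. The paper instead verifies the midpoint inequality $U_sU_t\ge U_{(s+t)/2}^2$ by expanding the product into three brackets: two are nonnegative by Theorem B applied to each of $K_s(p||q)$ and $K_s(q||p)$, and the cross term $K_s(p||q)K_t(q||p)+K_s(q||p)K_t(p||q)\ge 2\sqrt{V_sV_t}\ge 2V_{(s+t)/2}$ is handled by AM--GM together with the log-convexity of $V_s$ already established in the previous proposition. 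Your argument is self-contained and delivers the weighted inequality for all $\lambda\in(0,1)$ at once (the paper's midpoint version needs continuity to upgrade, which is available from Part 1); the paper's version buys a proof that reuses the $V_s$ result and avoids invoking H\"older. Your Parts 3 and 5 match the paper in substance: the paper writes the lower bound as $U_s\ge 2\sqrt{V_s}\ge 4H^2$, which is exactly your AM--GM computation with $V_s\ge 4H^4$ left unexpanded, and Part 5 is the same application of Lemma \ref{3} to $\phi(s)=\log U_s$. In Part 4 your first route --- a convex function symmetric about the line $s=1/2$ is decreasing to its left and increasing to its right --- is sound and in fact more elementary than the paper's mechanism (the paper omits the $U_s$ proof, referring to the $V_s$ argument via Proposition X). Your alternative route in Part 4 contains a small slip, though: applying Proposition X with $f(s)=\log K_s(p||q)$ reproduces the monotonicity of $\log V_s$, because $\log U_s$ is not of the form $f(s)+f(1-s)$; to run that argument for $U_s$ you should take $f(s)=K_s(p||q)$ itself, which is convex by Theorem A, so that $F(1-x,x)\le F(1-y,y)$ reads precisely $U_x\le U_y$. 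Since your primary route for Part 4 is valid, this does not affect the correctness of the proposal.
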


\begin{proof} \ 1. Omitted.

\vspace{0.5cm}

2. Since both $K_s$ and $V_s$ are log-convex functions, we get

$$
U_sU_t-U_{\frac{s+t}{2}}^2
$$
$$
=[K_s(p||q)+K_s(q||p)][K_t(p||q)+K_t(q||p)]-[K_{\frac{s+t}{2}}(p||q)+K_{\frac{s+t}{2}}(q||p)]^2
$$
$$
=[K_s(p||q)K_t(p||q)-K_{\frac{s+t}{2}}(p||q)^2]+[K_s(q||p)K_t(q||p)-K_{\frac{s+t}{2}}(q||p)^2]
$$
$$
+
[K_s(p||q)K_t(q||p)+K_s(q||p)K_t(p||q)-2K_{\frac{s+t}{2}}(p||q)K_{\frac{s+t}{2}}(q||p)]
$$
$$
\ge
[K_s(p||q)K_t(p||q)-K_{\frac{s+t}{2}}(p||q)^2]+[K_s(q||p)K_t(q||p)-K_{\frac{s+t}{2}}(q||p)^2]
$$
$$
+2[\sqrt{V_sV_t}-V_{\frac{s+t}{2}}]\ge 0.
$$

\vspace{0.5cm}

3. The graph symmetry follows from the fact that $U_s=U_{1-s},
s\in \mathbb R$.

\vspace{0.5cm}

We also have

$$
U_s\ge 2\sqrt{V_s}\ge 4H^2.
$$

\vspace{0.5cm}

Finally, since $p\neq q$ yields $\max\{p_i/q_i\}=p_*/q_*>1$,

we get

$$
K_s(p||q)>\frac{q_*(p_*/q_*)^s-1}{s(s-1)}\to\infty \ (s\to\infty).
$$

It follows that both $U_s$ and $V_s$ are unbounded from above.

\vspace{0.5cm}

4. Omitted.

\vspace{0.5cm}

5. \ The proof is obtained by another application of Lemma \ref{3}
with $\phi(s)=\log U_s$.

\end{proof}

\vspace{0.5cm}

\begin{remark} We worked here with the class $\Omega^+$ for the sake of
simplicity. Obviously that all results hold, after suitable
adjustments, for arbitrary probability distributions and in the
continuous case as well.

\end{remark}

\vspace{0.5cm}

\begin{remark} \ It is not difficult to see that the same properties
are valid for normalized divergences
$U_s^*=\frac{1}{2}(K_s(p||q)+K_s(q||p))$ and
$V_s^*=\sqrt{K_s(p||q)K_s(q||p)}$, with

$$
2H^2\le V_s^*\le U_s^*.
$$

\end{remark}

\vspace{0.5cm}

\section{Appendix}

\vspace{0.5cm}

\centerline{\bf A convexity property}

\vspace{0.5cm}

Most general class of convex functions is defined by the
inequality

\begin{equation}\label{1}
\frac{\phi(x)+\phi(y)}{2}\ge \phi(\frac{x+y}{2}).
\end{equation}

\vspace{0.5cm}

A function which satisfies this inequality in a certain closed
interval $I$ is called {\it convex} in that interval.
Geometrically it means that the midpoint of any chord of the curve
$y=\phi(x)$ lies above or on the curve.

\vspace{0.5cm}

Denote now by $Q$ the family of {\it weights} i.e., positive real
numbers summing to $1$. If $\phi$ is continuous, then much more
can be said i.e., the inequality

\begin{equation}\label{2}
p\phi(x)+q\phi(y)\ge\phi(px+qy)
\end{equation}

holds for any $p,q\in Q$. Moreover, the equality sign takes place
only if $x=y$ or $\phi$ is linear (cf. \cite{hlp}).

\vspace{0.5cm}

We shall prove here an interesting property of this class of
convex functions.

\vspace{0.5cm}

{\bf Proposition X} {\it Let $f(\cdot)$ be a continuous convex
function defined on a closed interval $[a, b]:=I$. Denote
$$
F(s,t):=f(s)+f(t)-2f(\frac{s+t}{2}).
$$

\vspace{0.5cm}

Then}
$$
\max_{s, t\in I} F(s,t)=F(a, b).\eqno(1)
$$

\vspace{0.5cm}

\begin{proof}

\vspace{0.5cm}

It suffices to prove that the inequality
$$
F(s,t)\le F(a, b)
$$
holds for $a<s<t<b$.

\vspace{0.5cm}

 In the sequel we need the following
assertion (which is of independent interest).

\vspace{0.5cm}

 \begin{lemma}  Let $f(\cdot)$ be a continuous convex
function on some interval $I\subseteq \mathbb R$. If $x_1, x_2,
x_3\in I$ and $x_1<x_2<x_3$, then

\vspace{0.5cm}

$$
(i) \ \ \ \frac{f(x_2)-f(x_1)}{2}\le f(\frac{x_2+x_3}
{2})-f(\frac{x_1+x_3}{2});
$$
$$
(ii) \ \ \ \frac{f(x_3)-f(x_2)}{2}\ge f(\frac{x_1+x_3}
{2})-f(\frac{x_1+x_2}{2}).
$$
\end{lemma}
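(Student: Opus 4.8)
The plan is to reduce both inequalities to a single structural fact: the monotonicity of the secant (chord) slopes of a convex function. For $u<v$ in $I$ I write $\sigma(u,v):=\frac{f(v)-f(u)}{v-u}$ for the slope of the chord of $y=f(x)$ joining $(u,f(u))$ and $(v,f(v))$. The first step is to record the classical three-chord inequality: for continuous convex $f$ the slope $\sigma(u,v)$ is nondecreasing in each argument, i.e. $u_1\le u_2$ and $v_1\le v_2$ (with $u_i<v_i$) imply $\sigma(u_1,v_1)\le\sigma(u_2,v_2)$. This follows at once from the weighted convexity inequality of the Appendix by expressing the middle of three ordered abscissae as a convex combination of the outer two and rearranging.

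For part $(i)$, set $a=x_1,\ b=x_2,\ c=x_3$. I would rewrite the two sides as scaled slopes. The left side equals $\frac{b-a}{2}\,\sigma(a,b)$, and since $\frac{b+c}{2}-\frac{a+c}{2}=\frac{b-a}{2}$, the right side equals $\frac{b-a}{2}\,\sigma\!\left(\frac{a+c}{2},\frac{b+c}{2}\right)$. Dividing by the common positive factor $\frac{b-a}{2}$, the claim becomes $\sigma(a,b)\le\sigma\!\left(\frac{a+c}{2},\frac{b+c}{2}\right)$. Because $a<c$ and $b<c$ give $a\le\frac{a+c}{2}$ and $b\le\frac{b+c}{2}$, the slope monotonicity applies directly and closes the case.

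Part $(ii)$ is handled symmetrically. The left side equals $\frac{c-b}{2}\,\sigma(b,c)$, and since $\frac{a+c}{2}-\frac{a+b}{2}=\frac{c-b}{2}$, the right side equals $\frac{c-b}{2}\,\sigma\!\left(\frac{a+b}{2},\frac{a+c}{2}\right)$. After cancelling the positive factor $\frac{c-b}{2}$, the assertion reads $\sigma\!\left(\frac{a+b}{2},\frac{a+c}{2}\right)\le\sigma(b,c)$, which again follows from monotonicity, since $a<b$ and $a<c$ yield $\frac{a+b}{2}\le b$ and $\frac{a+c}{2}\le c$.

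I do not anticipate a serious obstacle. The only point requiring care is the bookkeeping: matching each difference of $f$-values with a chord over a correctly oriented interval, and checking that in each part the two chords share the same positive horizontal width so that the common factor cancels cleanly. The single conceptual input is the secant-slope monotonicity; everything else is the elementary algebra of the midpoints.
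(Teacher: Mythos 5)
Your proof is correct, but it takes a genuinely different route from the paper's. The paper proves (i) directly from the weighted convexity inequality of the Appendix: since $x_1<x_2<\frac{x_2+x_3}{2}$, it writes $x_2=px_1+q\,\frac{x_2+x_3}{2}$, applies convexity once to bound $f(x_2)$, and then a second time with the weights $\frac{q}{2}$ and $\frac{2-q}{2}$, which after a small computation collapses the resulting convex combination exactly to $\frac{x_1+x_3}{2}$; part (ii) is then declared analogous. You instead notice that both sides of (i) are increments of $f$ over intervals of the common width $\frac{x_2-x_1}{2}$ (and of width $\frac{x_3-x_2}{2}$ in (ii)), so after cancelling that positive factor each inequality is exactly a comparison of secant slopes $\sigma(u,v)=\frac{f(v)-f(u)}{v-u}$, settled by the classical monotonicity of $\sigma$ in each argument --- the three-chord lemma, itself an immediate consequence of the paper's weighted convexity inequality. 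Your reduction buys symmetry and transparency: (i) and (ii) become two instances of a single principle, that increments of a convex function over equal-length intervals grow as the interval slides to the right (Wright convexity), whereas the paper's choice of weights is ad hoc and must be reworked for (ii); the paper's argument, in turn, buys self-containedness, using only the one displayed convexity inequality with no auxiliary slope machinery. One point worth making explicit in your write-up: the joint monotonicity $\sigma(u_1,v_1)\le\sigma(u_2,v_2)$ for $u_1\le u_2$, $v_1\le v_2$ follows from the three-chord lemma by moving one endpoint at a time, and since in (i) the relative order of $x_2$ and $\frac{x_1+x_3}{2}$ is not determined, the chords may be nested, overlapping, or disjoint (e.g.\ $u_1<v_1\le u_2<v_2$); the chaining works in every configuration, but a one-line case check should be recorded rather than left implicit.
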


 \begin{proof}

\vspace{0.5cm}

We shall prove the first part of the lemma; the proof of second
part goes along the same lines.

\vspace{0.5cm}

 Since  $x_1<x_2<\frac{x_2+x_3}{2}< x_3$, there exist
$p, q; \ 0< p, q< 1, p+q=1$ such that
$x_2=px_1+q\frac{x_2+x_3}{2}$.

\vspace{0.5cm}

 Hence,
$$
\frac{f(x_1)-f(x_2)}{2} + f(\frac{x_2+x_3}{2})\ge\frac{1}
{2}[f(x_1)-(pf(x_1)+qf(\frac{x_2+x_3}{2}))]+ f(\frac{x_2+x_3}{2})
$$
$$
=\frac{q}{2} f(x_1)+\frac{2-q}{2} f(\frac{x_2+x_3}{2})\ge
f(\frac{q}{2} x_1+\frac{2-q}{2}(\frac{x_2+x_3}
{2}))=f(\frac{x_1+x_3}{2}).
$$
\end{proof}

\vspace{0.5cm}

 Now, applying the part (i) with $x_1=a, x_2=s,
x_3=b$ and the part (ii) with $x_1=s, x_2=t, x_3=b$, we get
$$
\frac{f(s)-f(a)}{2}\le f(\frac{s+b}{2})-f(\frac{a+b}{2}); \eqno
(2)
$$
$$
\frac{f(b)-f(t)}{2}\ge f(\frac{s+b}{2})-f(\frac{s+t}{2}), \eqno
(3)
$$
respectively.

\vspace{0.5cm}

 Subtracting (2) from (3), the desired inequality
follows.

\end{proof}

\vspace{0.5cm}

\begin{corollary} Under the conditions of Proposition X, we
have that the double inequality

$$
2f(\frac{a+b}{2})\le f(t)+f(a+b-t)\le f(a)+f(b)\eqno(4)
$$

holds for each $t\in I$.

\end{corollary}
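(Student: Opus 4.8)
The plan is to exploit the reflection $t\mapsto a+b-t$, which maps the interval $I=[a,b]$ onto itself and whose fixed point is the midpoint $\frac{a+b}{2}$. Setting $s:=a+b-t$, I first observe that $s\in I$ whenever $t\in I$, and that the two arguments $t$ and $a+b-t$ are symmetric about the centre of the interval, so that
$$
\frac{s+t}{2}=\frac{(a+b-t)+t}{2}=\frac{a+b}{2}.
$$
This single identity is what ties both halves of (4) to results already available in the excerpt, and it is really the only idea required.

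For the left-hand inequality I would simply apply the defining convexity inequality \eqref{1} to the pair of points $t$ and $a+b-t$. Since their average equals $\frac{a+b}{2}$, inequality \eqref{1} reads
$$
\frac{f(t)+f(a+b-t)}{2}\ge f\Bigl(\frac{a+b}{2}\Bigr),
$$
which is precisely $2f(\frac{a+b}{2})\le f(t)+f(a+b-t)$.

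For the right-hand inequality I would rewrite everything through the functional $F$ of Proposition X. With $s=a+b-t$ one has $F(s,t)=f(t)+f(a+b-t)-2f(\frac{a+b}{2})$, while the endpoints give $F(a,b)=f(a)+f(b)-2f(\frac{a+b}{2})$; the subtracted term is the same in both because $\frac{a+b}{2}$ is at once the midpoint of $\{t,a+b-t\}$ and of $\{a,b\}$. Proposition X asserts that $F(s,t)\le F(a,b)$ for all admissible arguments, so cancelling the common term $2f(\frac{a+b}{2})$ yields $f(t)+f(a+b-t)\le f(a)+f(b)$, which is the claim.

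There is no genuine obstacle here: once the symmetric substitution is in place, the left inequality is bare convexity and the right inequality is a direct restatement of Proposition X. The only point requiring a moment's care is checking that $a+b-t\in I$, so that $F(s,t)$ is indeed one of the pairs over which the maximum in Proposition X is taken; this is immediate from $t\in[a,b]$.
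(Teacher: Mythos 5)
Your proof is correct and coincides with the paper's own argument: the paper likewise applies Proposition X with $s=a+b-t$ to get the right-hand inequality and dismisses the left-hand one as immediate from convexity. Your version merely spells out the details the paper leaves implicit, namely the midpoint identity $\frac{(a+b-t)+t}{2}=\frac{a+b}{2}$ and the cancellation of the common term $2f\bigl(\frac{a+b}{2}\bigr)$ in $F(s,t)\le F(a,b)$.
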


\begin{proof}

Since the condition $t\in I$ is equivalent with $a+b-t\in I$,
applying Proposition X with $s=a+b-t$ we obtain the right-hand
side of (4). The left-hand side inequality is obvious.
\end{proof}

\vspace{0.5cm}

\begin{remark} The relation (4) is a kind of pre-Hermite-Hadamard
inequalities. Indeed, integrating both sides of (4) over $I$, we
obtain the famous H-H inequality

$$
f(\frac{a+b}{2})\le\frac{1}{b-a}\int_a^b
f(t)dt\le\frac{f(a)+f(b)}{2},
$$

since $\int_a^b f(a+b-t)dt=\int_a^b f(t)dt$.
\end{remark}

\vspace{0.5cm}

\end{document}